\newtheorem{thm}[equation]{Theorem}
\newtheorem*{thm*}{Theorem}
\newtheorem*{lem*}{Lemma}
\newtheorem{lem}[equation]{Lemma}
\newtheorem{prop}[equation]{Proposition}
\newtheorem*{prop*}{Proposition}
\theoremstyle{definition}
\numberwithin{equation}{section}
\DeclareMathOperator{\vol}{vol}
\DeclareMathOperator{\shell}{shell}
\newcommand{\ph}{\varphi}
\title{High-dimensional holeyominoes}
\author[G.~Malen]{Greg Malen}
\address[G.~Malen]{Department of Mathematics, Union College, Schenectady, NY, United States}
\email{maleng@union.edu}
\author[F.~Manin]{Fedor Manin}
\address[F.~Manin]{Department of Mathematics, UCSB, Santa Barbara, CA, United States}
\email{manin@math.ucsb.edu}
\author[E.~Rold\'an]{\'Erika Rold\'an}
\address[E.~Rold\'an]{Zentrum Mathematik, TU M\"unchen, Garching b. M\"unchen, Germany}
\email{erika.roldan@ma.tum.de}
\begin{document}
\begin{abstract}
     What is the maximum number of \textit{holes} enclosed by a $d$-dimensional polyomino built of $n$ tiles? Represent this number by $f_d(n)$.  Recent results show that $f_2(n)/n$ converges to $1/2$.  We prove that for all $d \geq 2$ we have $f_d(n)/n \to (d-1)/d$ as $n$ goes to infinity. We also construct polyominoes in $d$-dimensional tori with the maximal possible number of holes per tile.  In our proofs, we use metaphors from error-correcting codes and dynamical systems.
\end{abstract}

\subjclass[2020]{05A16, 05A20, 05B50, 05D99}
\maketitle

\section{Introduction}
In 1954 \cite{golomb1954checker}, Solomon W.\ Golomb defined a polyomino as a \emph{finite rook-connected subset of squares of the infinite checkerboard}.  Since then, polyominoes (often under the name \emph{lattice animals}) have become an important object of study in statistical physics, where they are used as a simple model of polymers and other accretion phenomena \cite{eden1958probabilistic, stauffer1978monte}.  The related problem of counting all polyominoes of a given size with certain geometric properties, which may help understand the probabilities of different behaviors of a system, leads to difficult open questions in combinatorics; see \cite{guttmann2009polygons} for an overview.  But for example, the asymptotic growth rate of the total number of polyominoes of a given size is well-understood, including in the higher-dimensional cases studied in this paper \cite{BBG, barequet2021improved}.

For our purposes, we view polyomino-hood as a constraint on subsets of the square tessellation of $\mathbb{R}^2$, or more generally the tessellation of $\mathbb{R}^d$ by unit cubes. (Here one imagines a rook in $d$-dimensional chess which can move in any coordinate direction.)  Subject to this constraint, we look to maximize the ``frothiness'' of the subset: the number of \emph{holes} (bounded connected components of the complement) that are enclosed inside. On the way, we will encounter ideas from error-correcting codes, sphere-packing, and dynamical systems.

\subsection{Definitions and results}
We now begin the formalities.  A \emph{$d$-polyomino} is a finite union of tiles (unit cubes) of the regular cubical tessellation of $\mathbb{R}^d$ which has connected interior.  In what follows we use the term \emph{polyomino} to refer to any $d$-polyomino for any $d\geq 2$. We also use the term \emph{$n$-omino} to refer to a polyomino with $n$ tiles.

A \emph{hole} in a polyomino is a bounded connected component of its complement. Viewed through the lens of algebraic topology, the number of holes in a polyomino is its top Betti number: the rank of its $(d-1)$st homology group.

For a fixed $d$, we define the function $f_d:\mathbb{N} \to \mathbb{N}$ such that $f_d(n)$ is the maximum number of holes in a $d$-dimensional $n$-omino.  The function $f_2(n)$ was studied exhaustively in \cite{KaR} and \cite{MaR}.  Our main theorem gives an asymptotic bound for $f_d(n)$:
\begin{thm} \label{thm:main}
    For every fixed $d \geq 2$,
    \[f_d(n)=\frac{d-1}{d}n-\Theta\bigl(n^{\frac{d-1}{d}}\bigr).\]
\end{thm}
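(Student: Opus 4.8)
The plan is to prove matching upper and lower bounds, both read off from a single facet-counting identity for the cubical complex. Write $h$ for the number of holes of an $n$-omino $P$, let $I$ be the number of \emph{internal} facets (shared by two tiles) and $B$ the number of \emph{boundary} facets (a tile on one side, empty space on the other). Since each tile has $2d$ facets and internal ones are counted twice, $2dn = 2I + B$. The upper and lower bounds come from reading this identity in opposite directions.

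For the upper bound I would split $B = B_{\mathrm{in}} + B_{\mathrm{out}}$, where $B_{\mathrm{in}}$ counts facets adjacent to a hole and $B_{\mathrm{out}}$ those adjacent to the unbounded component of the complement. Three inputs then pin down $h$. First, since $P$ has connected interior its tile-adjacency graph is connected, so $I \ge n-1$. Second, each hole is a bounded complementary component enclosed entirely by tiles, so it contributes at least $2d$ boundary facets (equality exactly for a single empty cube), and distinct holes contribute disjoint facets, whence $B_{\mathrm{in}} \ge 2dh$. Third, filling in the holes turns $P$ into a region $R$ of volume at least $n$ whose outer surface is precisely $B_{\mathrm{out}}$, so the discrete isoperimetric inequality in $\mathbb{Z}^d$ gives $B_{\mathrm{out}} \ge c_d\, n^{(d-1)/d}$. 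Substituting,
\[2dn = 2I + B_{\mathrm{in}} + B_{\mathrm{out}} \ge 2(n-1) + 2dh + c_d\,n^{(d-1)/d},\]
which rearranges to $h \le \tfrac{d-1}{d}n - \tfrac{c_d}{2d}\,n^{(d-1)/d} + O(1)$, giving the upper half of the $\Theta$.

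For the lower bound I would produce a matching construction losing only $O(n^{(d-1)/d})$ holes. The identity shows that to approach $\tfrac{d-1}{d}n$ one needs, in the bulk, $I \approx n$ (average tile-degree $2$) together with single-cell holes ($B_{\mathrm{in}} = 2dh$); a short calculation forces filled density $d/(2d-1)$ and hole density $(d-1)/(2d-1)$. So the heart of the matter is a periodic pattern — equivalently a $2$-coloring of a torus $(\mathbb{Z}/N)^d$ — in which the tiles form a $2$-regular subgraph (a disjoint union of lattice cycles, each tile having exactly two tile-neighbors) whose complement consists of isolated empty cubes, each with all $2d$ neighbors filled. Such a pattern has exactly $(d-1)/d$ holes per tile and, having no boundary, realizes the optimum on the torus. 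To land in $\mathbb{R}^d$ I would restrict it to a cube of side $\approx n^{1/d}$ and then repair a boundary shell of $O(n^{(d-1)/d})$ tiles: adding tiles to reconnect the finitely many cycle-arcs severed by the truncation into one polyomino, and to seal the empty cubes that lost a neighbor at the cut. Each repair touches $O(n^{(d-1)/d})$ cells, so $h \ge \tfrac{d-1}{d}n - O(n^{(d-1)/d})$, and combined with the isoperimetric lower bound on the loss this yields the stated $\Theta$.

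The hard part will be exhibiting the optimal periodic pattern in every dimension. For $d=2$ it is the diagonal staircase coloring by $x_1 + 2x_2 \bmod 3$: the tiles (residues $1,2$) form parallel degree-$2$ staircases and the holes (residue $0$) are isolated. Encoding the surrounding constraints as a convolution over the color group, the requirement becomes $\widehat{\mathbf{1}_S}\cdot(\widehat{O} + 2d-2) \equiv 0$ off the trivial frequency, where $S$ is the hole set and $O$ the multiset of neighbor-offsets. A single linear form over $\mathbb{Z}/(2d-1)$ already fails this for $d=3$, so for $d \ge 3$ the holes must be placed as a subtler packing and the tiles threaded into space-filling cycles. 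This is exactly where I expect error-correcting codes (choosing $S$ as a code whose offset-translates cover the filled cells with the correct multiplicity) and dynamical systems (routing the degree-$2$ tile-curves through the torus, e.g.\ as orbits of a toral translation) to enter; verifying that such a pattern exists for all $d$, and that the truncation shell can always be repaired within $O(n^{(d-1)/d})$ tiles, is where the difficulty concentrates.
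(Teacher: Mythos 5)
Your upper bound is essentially the paper's own argument: the same facet partition $2dn = 2I + B_{\mathrm{in}} + B_{\mathrm{out}}$, the same three estimates ($I \ge n-1$ from connectivity of the dual graph, $B_{\mathrm{in}} \ge 2dh$ from single-cube holes, and a discrete isoperimetric bound on the outer perimeter of the polyomino with its holes filled in), and the same rearrangement. That half is correct.

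The genuine gap is in the lower bound, and it is twofold. First, the construction that carries all the content of the theorem for $d \ge 3$ is simply absent: you explicitly defer ``exhibiting the optimal periodic pattern in every dimension'' and verify only $d=2$. Second, the structural target you set for that pattern --- tiles forming a \emph{2-regular} subgraph, i.e.\ a disjoint union of lattice cycles, with isolated empty cubes --- is stronger than what your own counting identity forces, and it is not what an optimal pattern looks like. The identity forces only \emph{average} tile-degree $2$ in the bulk, and the paper's pattern $K_d$ achieves this with trees rather than cycles: take all even cubes (coordinate sum even) together with all full columns in the $d$-th coordinate direction whose index lies in the lift $\widetilde{L_{d-1}} \subset \mathbb{Z}^{d-1}$ of a Golomb--Welch perfect $1$-error-correcting Lee code over $\mathbb{Z}/(2d-1)\mathbb{Z}$. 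Each rook-connected component is then one full column with the even cubes of the adjacent columns attached as degree-$1$ leaves (a path with leaves has average degree $2$ just as a cycle does), every empty cube is an isolated single-cell hole, and the densities $d/(2d-1)$ and $(d-1)/(2d-1)$ come out exactly as you computed. Your Fourier observation that no linear pattern over $\mathbb{Z}/(2d-1)$ is 2-regular with isolated holes for $d\ge 3$ is a symptom that 2-regularity is the wrong ansatz, not that the holes need a subtler packing.

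The ansatz also breaks your repair step. A disjoint union of cycles truncated to a cube may have components lying entirely in the interior; these cannot be reconnected to the rest without filling interior cells, destroying order-$n$ holes, so the claim that all damage is confined to a boundary shell of $O(n^{(d-1)/d})$ tiles is unjustified for an unspecified cycle pattern. The paper's pattern avoids this automatically: every component is an unbounded column in the $d$-th direction, hence meets the boundary of the cube, and filling in the one-cube-thick shell connects all components at cost $O(n^{(d-1)/d})$. (Cycles and dynamical systems do enter the paper, but only for the toric statement, Theorem \ref{thm:tori}, where the lattice is chosen so that a single winding column visits every code column; even there the optimal connected dual graph is unicyclic --- one cycle plus leaves --- not 2-regular.) Finally, to get the bound for every $n$ rather than along a sparse sequence of volumes, the paper interpolates through domains built from fundamental parallelotopes, using a monotonicity lemma for shell volume; your sketch would need some such argument as well, though that part is routine compared to the missing construction.
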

For context, notice that in a $d$-dimensional checkerboard---let's say a tournament checkerboard with dark green and beige cubes---the union of the green cubes has about as many holes as tiles. However, this set is not a polyomino. Our theorem establishes the price of making sure the set is rook-connected: one needs to additionally fill in one in $2d-1$ beige cubes, and some extra near the boundary.  

We can avoid the issue of counting these extra boundary cubes, which generate the second order term in Theorem \ref{thm:main}, by reducing modulo a lattice in $\mathbb{Z}^d$.  The resulting torus is still tessellated by unit cubes, and we can define a \emph{toric polyomino} to be a rook-connected union of such cubes.  As one might hope, many such tori contain ``ideal polyominos'' with the maximal possible number of holes per tile:
\begin{thm} \label{thm:tori}
For every $d$, there is an infinite number of lattices $\Lambda \subset \mathbb{Z}^d$ such that there is a toric polyomino in $\mathbb{R}^d/\Lambda$ with $\frac{d}{2d-1}\det(\Lambda)$ tiles and $\frac{d-1}{2d-1}\det(\Lambda)$ holes.  Here $\det(\Lambda)$ is the covolume of $\Lambda$, i.e.~the volume of $\mathbb{R}^d/\Lambda$.
\end{thm}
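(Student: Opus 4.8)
The plan is to exhibit an explicit periodic construction in $\mathbb{R}^d$ that achieves the ideal ratio locally, and then check that it descends to a torus $\mathbb{R}^d/\Lambda$ for infinitely many lattices $\Lambda$. The key arithmetic target is a tiling where, among every $2d-1$ cubes, we place $d$ ``solid'' cubes forming the polyomino and leave $d-1$ as holes—so that the density of tiles is $\frac{d}{2d-1}$ and the density of holes is $\frac{d-1}{2d-1}$. My first step is to find the right periodic pattern: I expect the combinatorial heart of the paper (the error-correcting-code metaphor mentioned in the abstract) to produce a sublattice $C \subset \mathbb{Z}^d$ of index $2d-1$ whose cosets partition $\mathbb{Z}^d$ into classes, with one distinguished class (the filled cubes) chosen so that the filled set is rook-connected and each of the remaining $d-1$ ``beige'' classes near a filled class is genuinely enclosed as a hole. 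Concretely, I would look for a lattice generated by vectors such that $\mathbb{Z}^d/C \cong \mathbb{Z}/(2d-1)$, analogous to the diagonal ``perfect code'' structure used for sphere-packing arguments.

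Once the infinite periodic configuration $P \subset \mathbb{R}^d$ is fixed, the second step is to choose the quotient lattice. I would take $\Lambda = k\cdot C$ or more generally any finite-index sublattice $\Lambda \subseteq C$ chosen large enough and compatibly with the period of $P$, so that $P$ is $\Lambda$-periodic and descends to a well-defined toric polyomino $\bar P \subset \mathbb{R}^d/\Lambda$. Since there are infinitely many such sublattices (e.g.\ $\Lambda = kC$ for $k = 1, 2, 3, \dots$), this immediately yields the ``infinite number of lattices'' clause. The tile count is then $\frac{d}{2d-1}\det(\Lambda)$ and the hole count is $\frac{d-1}{2d-1}\det(\Lambda)$ by the density computation, provided the combinatorics are exact with no boundary defect—which is precisely the advantage of working on the torus, where there is no boundary.

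The two properties I must verify carefully are: (i) \emph{rook-connectedness} of $\bar P$, i.e.\ that the filled cubes form a single connected component under face-adjacency in the torus; and (ii) that the complement has exactly $\frac{d-1}{2d-1}\det(\Lambda)$ bounded components, each a genuine hole, with no unexpected mergers or tunnels. For (i), I would argue that the filled class, being a coset of an index-$(2d-1)$ lattice, is itself a lattice-like connected set—concretely, that consecutive filled cubes along coordinate directions differ by generators of $C$ that are short enough to be rook-adjacent, so one can walk between any two filled cubes. For (ii), I would verify that each beige cube is fully surrounded (all $2d$ of its facet-neighbors are filled, or its coset forms an isolated single-cube cavity), which is exactly the ``one in $2d-1$'' packing statement: each hole is a single cube whose entire boundary lies in $\bar P$.

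The main obstacle I anticipate is (ii)—ensuring each hole is a genuinely enclosed single cube rather than a larger cavity or a cube with an exposed face allowing it to connect to another complement component or to ``escape'' into an unbounded-looking region on the torus. This is where the precise choice of the code/lattice $C$ matters: I need the filled coset to be \emph{perfect} in the sense that every non-filled cube has all neighbors filled, which forces $d-1$ of the $2d-1$ cosets to each be a surrounded cavity and requires a genuine covering-radius argument (the sphere-packing ingredient). If a naive choice of $C$ leaves some beige cube with a beige neighbor, those cubes merge into a larger component and the hole count drops; so the crux is producing a lattice whose associated tiling realizes the extremal packing exactly. I would expect this to reduce to a clean number-theoretic identity on the generators of $C$ modulo $2d-1$.
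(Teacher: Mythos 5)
Your high-level frame (a periodic, code-based pattern of density $\frac{d}{2d-1}$ with isolated one-cube holes, quotiented by a lattice that preserves it) is the paper's frame, and the worry you flag as the crux, item (ii), is actually the automatic part: in the paper's pattern $K_d$, the filled cubes are all \emph{even} (checkerboard) cubes together with all columns lying over a perfect Lee code $\widetilde{L_{d-1}}\subset\mathbb{Z}^{d-1}$ (note the code lives in dimension $d-1$, not $d$; also your bookkeeping is off, since one coset of an index-$(2d-1)$ sublattice has density $\frac{1}{2d-1}$, not $\frac{d}{2d-1}$, and a single coset of a proper finite-index sublattice is never rook-connected). Every empty cube is then an odd cube not over the code, so all $2d$ of its neighbors are even and hence filled: holes are isolated unit cubes for free. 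The genuine gap is item (i). The extremal pattern $K_d$ is \emph{not} rook-connected in $\mathbb{R}^d$: its components are single columns over code points together with every other cube of the adjacent columns, and distinct components are never adjacent. Moreover, no connected periodic pattern with these parameters can exist. On the torus, with $n$ tiles, $h$ holes and $b$ dual-graph edges, the face count $2dn \geq 2b + 2dh$ together with $h=\frac{d-1}{d}n$ forces $b \leq n$, i.e.\ the dual graph is a tree plus at most one edge; but if the toric polyomino lifted to a connected $\Lambda$-periodic set in $\mathbb{R}^d$, the fundamental group of its dual graph would surject onto the deck group $\Lambda\cong\mathbb{Z}^d$, forcing $b \geq n+d-1$. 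For $d\geq 2$ these are incompatible, so connectivity can only be created by the quotient, never present upstairs.

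This is exactly where your lattice choice fails: $K_d/\Lambda$ is rook-connected if and only if the projection of $\Lambda$ to the first $d-1$ coordinates is the \emph{full} code lattice $\widetilde{L_{d-1}}$, since the components of $K_d$ are indexed by code points and merge in the quotient only when their indices agree modulo that projection. For your scaled lattices $\Lambda = k\Lambda_0$ with $k\geq 2$, the projection is $k\widetilde{L_{d-1}}$ and the quotient has $k^{d-1}$ connected components, so "take $\Lambda = kC$, done"---the step that was supposed to produce infinitely many lattices---collapses. The missing ingredient is the paper's dynamical/number-theoretic one: choose pairwise relatively prime $n_1,\dots,n_{d-1}$ (with $n_1$ even) and an odd $c$, and take $\Lambda$ generated by $n_1\mathbf{u}_1,\dots,n_{d-1}\mathbf{u}_{d-1}$ (where the $\mathbf{u}_i$ generate $\widetilde{L_{d-1}}$) together with the skew vector $c\mathbf{e}_d-\sum_i \mathbf{u}_i$. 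By the Chinese remainder theorem (a discrete Kronecker density argument), the image of a single column then winds around the torus and visits every code coset, so $K_d/\Lambda$ is connected; the infinitude of lattices comes from varying $c$ and the $n_i$, not from scaling. Without this ergodicity-type argument---or some substitute guaranteeing full projection onto the code lattice---your proof cannot be completed.
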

In particular, this means that there are such tori with arbitrarily large volume.

The rest of the paper is structured as follows: Section \ref{sec:perfect} introduces ideas from error-correcting codes, which we use to construct polyominoes with a large number of holes in Section \ref{lower}.  In Section \ref{upper}, we complete the proof of Theorem \ref{thm:main} by proving an upper bound. In Section \ref{toripol}, we prove Theorem \ref{thm:tori} using ideas about linear dynamical systems.

\section{Perfect codes in the Lee metric}\label{sec:perfect}

An \emph{error-correcting code} is a scheme for transmitting information over noisy channels.  One introduces some amount of redundancy so that small errors can be corrected without ambiguity.  The study of error-correcting codes mainly focuses on finding efficient codes.  This is a kind of packing problem: an efficient code is one in which most pieces of data are close to one (and only one) code word.  Of course, the exact parameters depend on the types of errors one expects.

\begin{figure}[h]
\begin{center}
\begin{tikzpicture}[thick,scale=1.5]
    \coordinate (A1) at (0, 0);
    \coordinate (A2) at (0, 1);
    \coordinate (A3) at (1, 1);
    \coordinate (A4) at (1, 0);
    \coordinate (B1) at (0.3, 0.3);
    \coordinate (B2) at (0.3, 1.3);
    \coordinate (B3) at (1.3, 1.3);
    \coordinate (B4) at (1.3, 0.3);

\node[anchor=north] at (0,0) {$000= 0$};
\node[anchor=south] at (1.3, 1.3) {$111= 1$};


    \draw[fill=black!20,opacity=0.5] (A1) -- (A2) -- (A3) -- (A4);
    \draw[fill=black!20,opacity=0.6] (A1) -- (A2) -- (B2) -- (B1);
    \draw[fill=black!20,opacity=0.6] (B1) -- (B2) -- (B3) -- (B4);
    \draw[fill=black!20,opacity=0.6] (A3) -- (B3) -- (B4) -- (A4);
    
    \draw[ultra thick, ->] (A1) -- (A2);
    \draw[ultra thick, ->] (A1) -- (A4);
    \draw[ultra thick, ->] (A1) -- (B1);
    \draw[dashed, ultra thick, ->] (B3) -- (A3);
    \draw[dashed, ultra thick, ->] (B3) -- (B2);
    \draw[dashed, ultra thick, ->] (B3) -- (B4);

\end{tikzpicture}
\end{center}
\caption{Every three-bit string of 0's and 1's is either a code word  000 or 111, or is distance one from exactly one of the code words.}
\label{fig:cubeerr}
\end{figure}
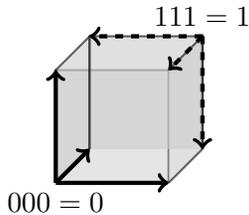
For example, say we want to transmit messages consisting only of zeros and ones.  The simplest way of introducing redundancy is to replace each 0 with the string 000 and each 1 with the string 111. We call these three-bit strings \emph{code words}. If the recipient sees the substring 101 instead of 000 or 111, they can guess that only one bit was flipped during transmission and the original bit was a 1. This is an example of a \emph{perfect code}: every possible string is either a code word or one error away from a unique code word.

The number of bit flips to get from one word to another defines a metric on $(\mathbb{Z}/2\mathbb{Z})^3$, called the \emph{Hamming metric}.  In this metric, the 1-balls around the code words are disjoint and cover all of $(\mathbb{Z}/2\mathbb{Z})^3$.  In other words, a perfect code is also a perfect packing by balls, or a tessellation.

This metaphor becomes clearer when we move to a slightly different setting: consider code words in the alphabet $\mathbb{Z}/q\mathbb{Z}$, for some $q>2$, and suppose that the errors that arise replace a letter $\alpha$ with $\alpha \pm 1$, rather than (as is perhaps more common) an arbitrary different letter.  The number of errors required to change from one point to another then gives us the \emph{Lee metric}, in other words the $\ell^1$ metric on $(\mathbb{Z}/q\mathbb{Z})^d$.

Perfect 1-error-correcting codes in the Lee metric were studied by Golomb and Welch \cite{GW}.  Such perfect codes can be ``unwrapped'' (lifted to $\mathbb{Z}^d$) to create perfect ball packings of $\mathbb{Z}^d$, or equivalently, tessellations of $\mathbb{R}^d$ by \emph{jacks}, polyominoes consisting of a tile and all its neighbors.

In the rest of this section we recall \cite[Theorem 3]{GW} and its proof:
\begin{thm} \label{thm:GW}
    Let $d \geq 1$ and $q=2d+1$.  Then there is a perfect 1-error-correcting code in the Lee metric on $(\mathbb{Z}/q\mathbb{Z})^d$.
\end{thm}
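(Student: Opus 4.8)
The plan is to exhibit the perfect code explicitly as the kernel of a single linear functional, and then verify directly that the Lee $1$-balls centered at the code words tile $(\mathbb{Z}/q\mathbb{Z})^d$. Concretely, I would define the group homomorphism
\[
\phi \colon (\mathbb{Z}/q\mathbb{Z})^d \to \mathbb{Z}/q\mathbb{Z}, \qquad \phi(x_1,\dots,x_d) = \sum_{i=1}^d i\,x_i \pmod q,
\]
and set $C = \ker\phi$. A Lee $1$-ball consists of its center together with the $2d$ nearest neighbors $x \pm e_i$, so it has exactly $q = 2d+1$ points; a perfect packing therefore forces $\lvert C\rvert = q^{d-1}$, which holds automatically because $\phi$ is surjective. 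The substance of the argument is not this count but the disjoint-covering property.

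The key step is to fix an arbitrary $y \in (\mathbb{Z}/q\mathbb{Z})^d$ and show it lies in exactly one code-word ball. Since the Lee metric is symmetric, a code word $c$ satisfies $d_{\mathrm{Lee}}(y,c)\le 1$ if and only if $c$ lies in the $1$-ball $B_1(y) = \{y\} \cup \{y\pm e_i : 1 \le i \le d\}$, so it suffices to prove that exactly one element of $B_1(y)$ belongs to $C$. Evaluating $\phi$ on these points and writing $s=\phi(y)$, I get the values $s$ together with $s\pm i$ for $1\le i\le d$, that is, the list $s-d,\,s-d+1,\,\dots,\,s+d$.

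The crux is that, precisely because $q = 2d+1$, this is a run of $q$ consecutive integers and hence represents each residue modulo $q$ exactly once; in particular exactly one of the values equals $0$. A short bookkeeping check that the $2d+1$ points of $B_1(y)$ are genuinely distinct---which follows since $2\not\equiv 0 \pmod q$ and $e_i \neq \pm e_j$ for $i\neq j$ whenever $q\ge 3$---then lets me pass from ``exactly one $\phi$-value vanishes'' to ``exactly one point of $B_1(y)$ is a code word.'' This is exactly the statement that the balls $B_1(c)$, $c \in C$, partition the space, so $C$ is a perfect $1$-error-correcting code.

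The step I expect to carry the real content is the elementary but pivotal observation that the weights $1,2,\dots,d$, together with their negatives and $0$, sweep out all of $\mathbb{Z}/q\mathbb{Z}$ exactly once; everything else is formal. The only point to watch is the degenerate case $d=1$ (so $q=3$), where the neighbor structure is minimal but the construction still goes through, and I would phrase the distinctness check so that it covers this case as well.
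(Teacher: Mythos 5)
Your proposal is correct and uses essentially the same construction as the paper: the code is the kernel of $\phi(x)=\sum_{i=1}^d i\,x_i$ on $(\mathbb{Z}/q\mathbb{Z})^d$, and the key fact in both arguments is that $\{0,\pm 1,\dots,\pm d\}$ is a complete residue system modulo $q=2d+1$. The only cosmetic difference is that you obtain covering and disjointness simultaneously (the $\phi$-values on any Lee $1$-ball are $q$ consecutive residues, hence hit $0$ exactly once), whereas the paper first exhibits the covering by an explicit case analysis on $k=\phi(y)$ and then rules out overlaps by comparing the code's density $1/q$ with the ball volume $q$.
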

\begin{proof}
We take the set of code words $L_d$ to be the set of vectors $(a_1,\ldots,a_d) \in (\mathbb{Z}/q\mathbb{Z})^d$ which satisfy
\[\sum_{i=1}^d ia_i=0.\]
Then for every value of $a_2,\ldots,a_d$, there is exactly one value of $a_1$ such that $(a_1,\ldots,a_d) \in L_d$.  (Here we can replace $1$ with any $i$ which is relatively prime to $q$.)  In particular, this demonstrates a fact that will be useful later:
\begin{prop} \label{1/q}
Every $q \times 1 \times \cdots \times 1$ subset of $(\mathbb{Z}/q\mathbb{Z})^d$ contains exactly one element of $L$.
\end{prop}
In particular, the density of $L_d$ is $1/q$.

To show that $L_d$ is indeed a perfect 1-error correcting code, it is now enough to show that the jacks centered at points of $L_d$ cover all of $(\mathbb{Z}/q\mathbb{Z})^d$.  Let $(a_1,\ldots,a_d) \notin L_d$, and suppose that
\[\sum_{i=1}^d ia_i=k.\]
Then $L_d$ contains
\begin{align*}
    & (a_1,\ldots,a_k-1,\ldots,a_d) & \mbox{if }k \leq d \\
    & (a_1,\ldots,a_{q-k}+1,\ldots,a_d) & \mbox{if }k \geq d+1.
\end{align*}
Thus $(\mathbb{Z}/q\mathbb{Z})^d$ is covered by jacks centered at points of $L_d$.  Since a jack has volume $2d+1=q$, and the density of $L_d$ is $1/q$, this covering cannot have any overlaps.
\end{proof}

\section{Building polyominoes using perfect codes} \label{lower}

We will prove the lower bound of Theorem \ref{thm:main} by giving a near-optimal construction for all $d \geq 2$.  Later, we will prove the upper bound, showing that the construction is in fact optimal to first order.

The polyominoes we construct look like an almost-cubical shell surrounding an interior sponge which has as many holes as possible while still being connected.  Namely, given a domain $D \subset \mathbb{R}^d$, for example a large cube, we construct a polyomino $P(D)$ contained in $D$.  The cubes that touch the boundary of $D$, even if only in a corner---we call these the \emph{shell} of $D$---are all ``filled in'', i.e.~contained in $P(D)$.  The rest (the \emph{interior} of $D$) is filled according to a certain repeating pattern which is just dense enough that (1) every green checkerboard cube is filled in and (2) every filled-in cube is connected to the shell via a path of filled-in cubes.

We shift our tiling so that centers of cubes lie in $\mathbb{Z}^d$ and label cubes by the coordinates of their centers.  We refer to cubes for which the sum of the coordinates is 0 and 1 mod 2 as \emph{even} and \emph{odd} cubes, respectively.  We use the even cubes as the ``green'' checkerboard cubes of condition (1).

We satisfy condition (2) by filling in certain whole columns, so that every unit cube is either inside or adjacent to such a column.  To fill as few cubes as possible, we would like to choose as few columns as possible; this is achieved if every cube not inside one of the chosen columns is adjacent to exactly one of them.  The columns are indexed by elements of $\mathbb{Z}^{d-1}$, so we would like to find a subset $\widetilde{L_{d-1}} \subset \mathbb{Z}^{d-1}$ such that every point not in $\widetilde{L_{d-1}}$ neighbors exactly one element of $\widetilde{L_{d-1}}$.  Such a set is provided by Theorem \ref{thm:GW}: it is the lift to $\mathbb{Z}^{d-1}$ of the set $L_{d-1} \subset (\mathbb{Z}/(2d-1)\mathbb{Z})^{d-1}$ constructed there.

We can think of the inner part of $P(D)$ as the intersection of the interior of $D$ with a set of cubes indexed by a set $K_d \subset \mathbb{Z}^d$.  This consists of all even cubes and all cubes whose first $d-1$ coordinates are an element of $\widetilde{L_{d-1}}$.
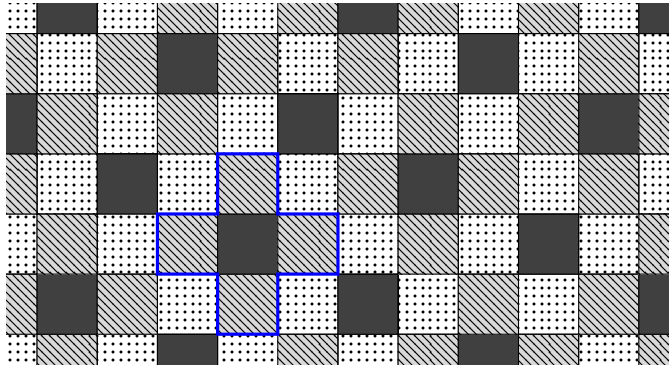
\begin{figure}
    \centering
    \begin{tikzpicture}[scale=0.8]
    \clip (-0.5,-0.5) -- (-0.5,5.5) -- (10.5,5.5) -- (10.5,-0.5) -- cycle;
    \newcounter{myint}
    \foreach \x in {-1,...,10}
        \foreach \y in {-1,...,5} {
            \pgfmathsetcounter{myint}{mod(\x+2*\y,5)}
            \ifnum\value{myint}=0
                \filldraw[fill=gray!50!black] (\x,\y) rectangle ++(1,1);
            \else
                \pgfmathsetcounter{myint}{\x+\y}
                \ifodd\value{myint}
                    \fill[color=gray!30!white] (\x,\y) rectangle ++(1,1);
                    \draw[pattern=north west lines] (\x,\y) rectangle ++(1,1);
                \else
                    \draw[pattern=dots] (\x,\y) rectangle ++(1,1);
                \fi
            \fi
        }
        \draw[blue, very thick] (3,0) -- (4,0) -- (4,1) -- (5,1) -- (5,2) -- (4,2) -- (4,3) -- (3,3) -- (3,2) -- (2,2) -- (2,1) -- (3,1) -- cycle;
    \end{tikzpicture}
    \caption{The set $K_3$, viewed from above.  Black columns are entirely in $K_3$ (these correspond to elements of $\widetilde{L_2}$); in shaded columns, odd-numbered layers are in $K_3$, and in dotted columns, even-numbered layers are in $K_3$.  One 2-dimensional jack is outlined.}
    \label{fig:L2}
\end{figure}
Then from Proposition \ref{1/q} we get:
\begin{prop} \label{q+1/2q}
Let $q=2d-1$.  Then every $q \times 1 \times \cdots \times 1 \times 2$ subset of $\mathbb{Z}^d$ contains exactly $q+1$ elements of $K_d$.
\end{prop}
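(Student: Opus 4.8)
The plan is to count the points of $K_d$ in the box by inclusion--exclusion, separating the two defining conditions of $K_d$. Fix coordinates so that the $q \times 1 \times \cdots \times 1 \times 2$ box $B$ has its first coordinate $x_1$ ranging over $q$ consecutive integers, its middle coordinates $x_2,\ldots,x_{d-1}$ held at fixed values, and its last coordinate $x_d$ ranging over two consecutive integers. Recall that a cube $(x_1,\ldots,x_d)$ lies in $K_d$ precisely when it satisfies at least one of: (A) it is even, i.e.\ $x_1+\cdots+x_d \equiv 0 \pmod 2$; or (B) the reduction of $(x_1,\ldots,x_{d-1})$ modulo $q$ lies in $L_{d-1}$. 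Here $q = 2d-1 = 2(d-1)+1$, which is exactly the modulus attached to $L_{d-1}$ by Theorem \ref{thm:GW}, so Proposition \ref{1/q} applies in dimension $d-1$.

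First I would count the cubes in $B$ satisfying (B). Since $x_1$ runs through $q$ consecutive values, its reduction mod $q$ hits every residue exactly once, so the tuple $(x_1,\ldots,x_{d-1}) \bmod q$ sweeps out a $q \times 1 \times \cdots \times 1$ subset of $(\mathbb{Z}/q\mathbb{Z})^{d-1}$. By Proposition \ref{1/q} this subset contains exactly one element of $L_{d-1}$, so exactly one value of $x_1$ satisfies (B); and because (B) ignores $x_d$, both choices of $x_d$ then contribute. Thus $B$ contains exactly two cubes satisfying (B).

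Next I would count the cubes satisfying (A). Writing $s = x_2+\cdots+x_{d-1}$ for the fixed partial sum, condition (A) reads $x_1+x_d \equiv s \pmod 2$. For each of the $q$ values of $x_1$, the two consecutive values of $x_d$ have opposite parities, so exactly one of them achieves the required parity; hence exactly $q$ cubes in $B$ satisfy (A). For the overlap, I restrict to the unique value of $x_1$ found above: among its two values of $x_d$, again exactly one gives the correct parity, so exactly one cube in $B$ satisfies both (A) and (B).

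Combining these counts by inclusion--exclusion gives
\[
|K_d \cap B| = q + 2 - 1 = q+1,
\]
as claimed. The argument is essentially bookkeeping, so the only points requiring care are verifying that $x_1$ genuinely ranges over a full residue system modulo $q$ (so that Proposition \ref{1/q} applies cleanly) and tracking the parities correctly in the overlap term; I do not anticipate a genuine obstacle beyond this.
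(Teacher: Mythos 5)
Your proof is correct and is essentially the paper's own argument: the paper derives this count directly from Proposition \ref{1/q} (finding the unique column hitting $\widetilde{L_{d-1}}$, then counting parities), and your inclusion--exclusion of the $q$ even cubes, the $2$ column cubes, and their single overlap is exactly the bookkeeping the paper leaves implicit, giving $q+2-1=q+1$.
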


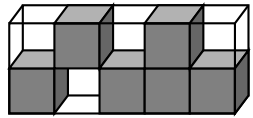
\begin{figure}[h]
\begin{center}
\begin{tikzpicture}[thick,scale=0.6]
    \foreach \x in {0,...,5}
        \foreach \y in {0,1,2} {
            \coordinate (A\x\y) at (\x, \y);
            \coordinate (B\x\y) at (\x+0.3,\y+0.4);
        }
    
    \draw (B00) -- (B02) -- (B52) -- (B50) -- cycle;
    \draw (A00) -- (B00) (A02) -- (B02) (A52) -- (B52) (A50) -- (B50);

    \draw[fill=black!50] (A00) -- (A01) -- (A11) -- (A10) -- cycle;
    \draw[fill=black!30] (A11) -- (A01) -- (B01) -- (B11) -- cycle;
    \draw[fill=black!60] (A11) -- (B11) -- (B10) -- (A10) -- cycle;

    \draw[fill=black!50] (A11) -- (A12) -- (A22) -- (A21) -- cycle;
    \draw[fill=black!30] (A22) -- (A12) -- (B12) -- (B22) -- cycle;
    \draw[fill=black!60] (A22) -- (B22) -- (B21) -- (A21) -- cycle;
    
    \draw[fill=black!50] (A20) -- (A21) -- (A31) -- (A30) -- cycle;
    \draw[fill=black!30] (A31) -- (A21) -- (B21) -- (B31) -- cycle;
    
    \draw[fill=black!50] (A30) -- (A31) -- (A41) -- (A40) -- cycle;
    \draw[fill=black!50] (A31) -- (A32) -- (A42) -- (A41) -- cycle;
    \draw[fill=black!30] (A42) -- (A32) -- (B32) -- (B42) -- cycle;
    \draw[fill=black!60] (A42) -- (B42) -- (B41) -- (A41) -- cycle;
    
    \draw[fill=black!50] (A40) -- (A41) -- (A51) -- (A50) -- cycle;
    \draw[fill=black!30] (A51) -- (A41) -- (B41) -- (B51) -- cycle;
    \draw[fill=black!60] (A51) -- (B51) -- (B50) -- (A50) -- cycle;
 
    \draw (A00) -- (A02) -- (A52) -- (A50) -- cycle;
\end{tikzpicture}
\end{center}
\caption{One possible filling pattern of a $5 \times 1 \times 2$ parallelepiped in $K_3$.}
\label{fig:parallel}
\end{figure}

In particular, the density of $K_d$ is
\[\frac{q+1}{2q}=\frac{2d}{4d-2}=\frac{d}{2d-1}.\]
Thus, by Proposition \ref{q+1/2q}, if the interior of $D$ (after subtracting the boundary cubes) is a union of $q \times 1 \times \cdots \times 1 \times 2$ parallelotopes, it contains $d-1$ holes for every $d$ filled-in tiles.  If in addition we choose $D$ to be close to isoperimetric, then
\[\vol(\shell(D))=O\bigl(\vol(D)^{\frac{d-1}{d}}\bigr).\]
The volume of $P(D)$ will then be
\[\vol(P(D))=\frac{d}{2d-1}\vol(D)+\frac{d-1}{2d-1}\vol(\shell(D))\]
and the number of holes in $P(D)$ will be
\[\frac{d-1}{2d-1}(\vol(D)-\vol(\shell(D)))=\frac{d-1}{d}(\vol(P(D))-\vol(\shell(D))).\]

A specific sequence of such polyominoes arises from the sequence $\{Q_i\}_{i=1,2,\ldots}$ of cubes of side length $2qi+2$.  In this case, every cube of $\shell(Q_i)$ has a $(d-1)$-face in the boundary, so that $\vol(\shell(Q_i)) \leq \vol_{s-1}(\partial Q_i)$, and
\[\vol_{d-1}(\partial Q_i) \leq 2d(\vol(Q_i))^{\frac{d-1}{d}} \leq 2d\left[\frac{2d-1}{d}\vol(P(Q_i))\right]^{\frac{d-1}{d}} \leq 4d(\vol(P(Q_i)))^{\frac{d-1}{d}}.\]
This shows that
\[f_d(\vol(P(Q_i))) \geq \frac{d-1}{d}\vol(P(Q_i))-4(d-1)\vol(P(Q_i))^{\frac{d-1}{d}}.\]
This proves the lower bound of Theorem \ref{thm:main} for a certain sequence of $n$, but not yet all $n$.

One way of completing the proof for all $n$ is to remark that
\[\vol(P(Q_{i+1}))-\vol(P(Q_i))=O\bigl(\vol(P(Q_i))^{\frac{d-1}{d}}\bigr),\]
and therefore we can obtain a good enough lower bound for all $n$ between $\vol(P(Q_i))$ and $\vol(P(Q_{i+1}))$ by adding cubes to $P(Q_i)$ arbitrarily.  However, we would like to obtain a more explicit bound.

To interpolate between the volumes of $P(Q_i)$ and $P(Q_{i+1})$, we use polyominoes $P(D)$ for sets $D$ of more complicated shape.  The interiors of these sets will still be composed of $q \times 1 \times \cdots \times 1 \times 2$ parallelotopes, which we will call \emph{fundamental parallelotopes}.  Let $m$ be any number between $(4d-2)^{d-1}i^d$ and $(4d-2)^{d-1}(i+1)^d$ (these are the numbers of fundamental parallelotopes in the interior of $Q_i$ and $Q_{i+1}$, respectively).  Let $D^0_m$ be the set composed of the interior of $Q_i$ together with some extra fundamental parallelotopes in the interior of $Q_{i+1}$ to make a total of $m$.  To choose these unambiguously, we add them in lexicographic order by the coordinates of the ``lowest'' corner.  In particular, we get from $D^0_m$ to $D^0_{m+1}$ by adding one fundamental parallelotope.  We define $D_m$ to be $D^0_m$ together with a one-cube-thick shell surrounding it.

We claim:
\begin{lem}
For every $(4d-2)^{d-1}i^d \leq m \leq (4d-2)^{d-1}(i+1)^d$,
\[\vol(\shell(D_m)) \leq \vol(\shell(Q_{i+1})).\]
\end{lem}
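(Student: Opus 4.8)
The plan is to construct an explicit injection from the cubes of $\shell(D_m)$ into the cubes of $\shell(Q_{i+1})$. Since each shell is a disjoint union of unit cubes, such an injection immediately gives $\vol(\shell(D_m)) \le \vol(\shell(Q_{i+1}))$. Everything hinges on a structural feature of the lexicographic fill, which I would record first. Placing coordinates so that the interior of $Q_{i+1}$ is the box $B=\{0,\dots,H-1\}^d$ with $H=2q(i+1)$ and $Q_{i+1}=\{-1,\dots,H\}^d$, I claim that $D^0_m$, viewed as a set of unit cubes, is a \emph{down-set}: if a cube lies in $D^0_m$, so does any cube obtained by lowering some coordinates. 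This holds because lexicographic order is a linear extension of the coordinatewise partial order (so a lexicographic prefix is downward closed), because the map sending a cube to the lowest corner of its fundamental parallelotope is monotone (so down-closedness passes from parallelotopes to cubes), and because the seed---the interior of $Q_i$---is itself a down-set in the corner. The consequence I actually need is that $D_m$, the king-closure of $D^0_m$ (that is, $D^0_m$ together with its one-cube shell), meets every axis-parallel line of cubes in an interval $\{-1,0,\dots,T\}$ anchored at the boundary of $Q_{i+1}$: each axis-column of the down-set $D^0_m$ is an initial segment $\{0,\dots,t\}$, and dilating by the king-neighborhood merges the segments of neighboring columns into a single segment $\{-1,\dots,T\}$. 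In particular $Q_i\subseteq D_m\subseteq Q_{i+1}$.

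Using this interval structure I would define $\phi\colon \shell(D_m)\to\shell(Q_{i+1})$ as follows. If a shell cube $c$ already lies on $\partial Q_{i+1}$ (some coordinate equal to $-1$ or $H$), set $\phi(c)=c$. Otherwise $c$ is interior to $Q_{i+1}$, and if it is \emph{facet-exposed}---some $c+e_j\notin D_m$---then by the interval structure $c$ is the top cube of its $j$-column and every cube strictly above it in direction $j$ lies outside $D_m$. Taking $j$ minimal with this property, set $\phi(c)=c+(H-c_j)e_j$, the cube obtained by sliding $c$ to the far face $\{x_j=H\}\subseteq\partial Q_{i+1}$. Injectivity then follows from three checks: two cubes pushed in the same direction lie in the same column and hence coincide, since each column has a unique top; two cubes pushed in different directions $j\neq j'$ cannot agree, because $\phi(c)$ has $j$-coordinate $H$ but $j'$-coordinate at most $H-1$ while $\phi(c')$ has these reversed; and a pushed image is never a fixed point, because the cubes above $c$ lie outside $D_m$, so $\phi(c)\notin D_m$ whereas every fixed point lies in $D_m$. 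This already handles all boundary and all facet-exposed shell cubes.

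The main obstacle is the remaining shell cubes: those $c$ that touch the complement of $D_m$ only along a diagonal, so that all $2d$ axis-neighbors of $c$ lie in $D_m$ but some diagonal neighbor does not. These are not facet-exposed, cannot be slid along an axis into empty space, and are therefore invisible to $\phi$ as defined above. They occur precisely along the reentrant edges where the single partially-filled slab of the lexicographic fill meets the filled region, so they lie on (a subset of) the $(d-2)$-skeleton of the frontier and number $O(H^{d-2})$. I expect to absorb them using the slack in the target count: writing $\vol(\shell(Q_{i+1}))=(H+2)^d-H^d=2\sum_{k=0}^{d-1}(H+2)^k H^{d-1-k}$, this exceeds the $2dH^{d-1}$ facets of $B$ by a positive $\Theta(H^{d-2})$ amount, and the image of $\phi$ leaves at least this many boundary cubes of $Q_{i+1}$ unused (for instance far-face cubes sitting over unfilled columns are never push-targets).

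The delicate step, and where I would concentrate, is making this last comparison rigorous: either bounding the number of diagonal-only shell cubes against a verified lower bound on the unused portion of $\shell(Q_{i+1})$, or else extending $\phi$ so that each diagonal cube is routed to a distinct unused boundary cube along the direction of its missing diagonal neighbor, while preserving the injectivity established above. Either route reduces the lemma to the structural interval property of $D_m$ plus a controlled accounting of the $O(H^{d-2})$ reentrant cubes, which is consistent with the substantial numerical slack one sees in small cases.
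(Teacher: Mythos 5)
Your structural observations are sound: with $Q_i$ cornered inside $Q_{i+1}$, the lexicographic fill does make $D^0_m$ a coordinatewise down-set, $D_m$ inherits the column-interval structure, and your map $\phi$ is injective on the boundary cubes and the facet-exposed cubes for exactly the reasons you give. But the proof has a genuine gap precisely where you flag it: the shell cubes that meet the complement of $D_m$ only along an edge or corner (all $2d$ axis-neighbors filled) are never assigned an image, and neither of your proposed remedies is carried out or straightforwardly correct. The slack comparison is not a soft estimate: writing $H=2q(i+1)$ for the side of the interior of $Q_{i+1}$ as in your notation, the number of diagonal-only cubes and the excess $\vol(\shell(Q_{i+1}))-2dH^{d-1}$ are both $\Theta(H^{d-2})$, so the inequality hinges on constants that change with the fill stage; moreover, in the late stage, when $D_m$ is nearly all of $Q_{i+1}$, almost every cube of $\shell(Q_{i+1})$ is a fixed point or a push target, so the ``unused'' supply must be located and counted, not asserted. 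The diagonal-routing alternative can collide with the injection you already built: for $d=2$, the reentrant-corner cube at the top of the partially filled column, slid along its missing diagonal, can land on the far face $\{x_1=H\}$ in a row whose top cube is already pushed to that same target. So as written, the lemma is not proved.

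The paper's own argument avoids any global injection, and it is worth seeing why it sidesteps your difficulty. It passes from $D_m$ to $D_{m+1}$ by adding cubes one at a time in lexicographic order and shows that the shell volume never decreases at any single step: when a cube $\mathbf x$ is added, the orthant $\mathbf x+(\mathbb{Z}_{\geq 0})^d$ is still empty, so $\mathbf x$ itself enters the shell, while the only cube that can leave the shell is $(x_1-1,\ldots,x_d-1)$---every other cube adjacent to $\mathbf x$ still touches an empty cube of that orthant and so stays in the shell. Summing over steps and using $D_{(4d-2)^{d-1}(i+1)^d}=Q_{i+1}$ gives the lemma. Note that your down-set property and the paper's empty-orthant property are essentially the same use of the lexicographic order; deployed locally, one cube at a time, it yields a two-line accounting with no exceptional cubes, whereas deployed globally as an injection it forces exactly the codimension-two bookkeeping you could not close. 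If you want to salvage your write-up, the cleanest repair is to discard $\phi$ and prove the monotonicity statement $\vol(\shell(D_m))\leq\vol(\shell(D_{m+1}))$ directly.
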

\begin{proof}
We will actually show that $\vol(\shell(D_m)) \leq \vol(\shell(D_{m+1}))$.  This suffices to prove the lemma since $D_{(4d-2)^{d-1}(i+1)^d}=Q_{i+1}$.

Notice that $D_m \subset D_{m+1}$.  Moreover, since fundamental parallelotopes are added in lexicographic order, if a cube labeled by $\mathbf x \in \mathbb Z^d$ is in $D_{m+1} \setminus D_m$, then every cube $\mathbf y \in D_m$ has $y_i<x_i$ for at least one $i$.

Let us proceed from $D_m$ to $D_{m+1}$, adding one cube at a time in lexicographic order.  At the time we add a cube $\mathbf x$, the positive orthant $\mathbf x+(\mathbb{Z}_{\geq 0})^d$ is completely empty.  We claim that the volume of the shell never decreases during this process: the shell gains the cube $\mathbf x$ and loses at most one cube.  Indeed, the only cube that might leave the shell is $(x_1-1,\ldots,x_d-1)$.  Any other cube which shares a face with $\mathbf x$ will remain in the shell since it also shares faces with other cubes in $\mathbf x+(\mathbb{Z}_{\geq 0})^d$.
\end{proof}

From this we have:
\begin{align*}
    \vol(\shell(D_m)) 
    &\leq 2d\bigl(\vol(D_m)^{1/d}+2d-1\bigr)^{d-1} \\
    &\leq 4d(\vol(P(D_m)))^{\frac{d-1}{d}}+O\bigl(\vol(P(D_m))^{\frac{d-2}{d}}\bigr).
\end{align*}
This shows that
\begin{equation} \label{eqn:lower}
    f_d(\vol(P(D_m))) \geq \frac{d-1}{d}\vol(P(D_m))-4(d-1)\vol(P(D_m))^{\frac{d-1}{d}}-O\bigl(\vol(P(D_m))^{\frac{d-2}{d}}\bigr).
\end{equation}
Now, the volumes of the $P(D_m)$ still don't cover every $n$.  However, the difference between $D_m$ and $D_{m+1}$ is at most one fundamental parallelotope together with a shell surrounding it, which means that it has a bound depending only on $d$:
\[\vol(P(D_{m+1}))-\vol(P(D_m)) \leq (2d+3) \cdot 3^{d-2} \cdot 4-(2d-2).\]
Thus interpolating by appending extra tiles arbitrarily to $P(D_m)$ extends the bound of \eqref{eqn:lower} to every $n$:
\[f_d(n) \geq \frac{d-1}{d}n-4(d-1)n^{\frac{d-1}{d}}-O\bigl(n^{\frac{d-2}{d}}\bigr).\]

\section{Upper bound} \label{upper}

We bound above the number of holes in a $d$-polyomino of volume $n$ by bounding the number of $(d-1)$-dimensional faces which border holes; this strategy to determine an upper bound for $f_d(n)$ is used in \cite{KaR} for the case $d=2$.  The resulting bound will match the first-order term of Theorem \ref{thm:main}.

For a $d$-polyomino $A$, the $(d-1)$-dimensional faces of the cubes that make it up are naturally partitioned into three sets: interior faces incident to two adjacent $d$-dimensional cubes; faces which bound holes; and exterior faces on the outer perimeter of $A$. We denote the sizes of these sets by $b(A)$, $p_{h}(A)$, and $p_{o}(A)$, respectively.

Suppose $A$ has $n$ $d$-cubes and encloses $h$ holes. Every $d$-cube has $2d$ $(d-1)$-dimensional faces.  Adding all of these together double-counts the interior faces, yielding that
\[2dn = p_{o}(A)+2b(A)+p_{h}(A).\]
The smallest holes are the size of a single $d$-cube, so $p_h(A) \geq 2dh$, and we can write
\begin{equation} \label{eqn:2dh}
    2dh \leq p_{h}(A) = 2dn - 2b(A) - p_{o}(A).
\end{equation}

We would like to find an upper bound on $h$ in terms of $n$ and $d$.  For this, we must find lower bounds for both $b(A)$ and $p_{o}(A)$.  Notice that $b(A)$ is (equivalently) the number of edges in the dual graph of $A$, so the minimal value for given $n$ is attained when the dual graph is a tree, and $b(A) \geq n-1$.  On the other hand, $p_o(A)$ is the perimeter of the union of $A$ and all its holes, which has volume at least $n+h$.   Since the most compact polyomino is a cube, this is bounded by
\[p_o(A) \geq 2d(n+h)^{\frac{d-1}{d}}.\]
Plugging these bounds into \eqref{eqn:2dh}, we get
\[h \le \frac{2dn-2(n-1)-2d(n+h)^{\frac{d-1}{d}}}{2d} \leq \frac{d-1}{d}n-(n+h)^{\frac{d-1}{d}}.\]
When $d=2$, this bound is essentially sharp, as discussed in \cite{MaR}.  On the other hand, when $d \geq 3$, the two quantities $b(A)$ and $p_o(A)$ cannot be simultaneously minimized, because the dual graph of the boundary of a large cube has many cycles.  It is likely that a tighter theoretical upper bound can be obtained by ``trading off'' the number of cycles and the size of the boundary.  Still, the naive bound gives the desired upper bound for Theorem \ref{thm:main}:
\[f_d(n) \leq \frac{d-1}{d}n-n^{\frac{d-1}{d}}.\]

\section{Toric polyominoes}\label{toripol}

We now study toric polyominoes and prove Theorem \ref{thm:tori}, which we restate here:
\begin{thm*}
For every $d \geq 2$, there is an infinite number of lattices $\Lambda \subset \mathbb{Z}^d$ such that there is a toric polyomino in $\mathbb{R}^d/\Lambda$ with $\frac{d}{2d-1}\det(\Lambda)$ tiles and $\frac{d-1}{2d-1}\det(\Lambda)$ holes.
\end{thm*}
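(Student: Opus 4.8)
The plan is to put the periodic pattern $K_d$ from Section \ref{lower} directly onto a torus, choosing $\Lambda$ so that $K_d$ descends to a connected set. First I would record the translational symmetries of $K_d$. The set $\widetilde{L_{d-1}}$ is cut out of $\mathbb Z^{d-1}$ by the single congruence $\sum_{i=1}^{d-1} i x_i \equiv 0 \pmod{2d-1}$, so it is a sublattice of index (hence covolume) $2d-1$. A translation by $\lambda=(\lambda_1,\dots,\lambda_d)\in\mathbb Z^d$ carries $K_d$ to itself exactly when it respects both defining features --- the checkerboard parity and the column pattern --- that is, when $\sum_i\lambda_i$ is even and $(\lambda_1,\dots,\lambda_{d-1})\in\widetilde{L_{d-1}}$. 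Hence $K_d$ is invariant under
\[\Lambda_0=\{\lambda\in\mathbb Z^d:(\lambda_1,\dots,\lambda_{d-1})\in\widetilde{L_{d-1}},\ \textstyle\sum_i\lambda_i\text{ even}\}\]
and under every sublattice $\Lambda\subseteq\Lambda_0$; for such $\Lambda$ the pattern $K_d$ descends to a set $\bar K_d\subset\mathbb R^d/\Lambda$.

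The tile and hole counts then fall out of the density computation already carried out for $K_d$. By Proposition \ref{q+1/2q} the density of $K_d$ is $d/(2d-1)$, so $\bar K_d$ has $\frac{d}{2d-1}\det(\Lambda)$ tiles, and its complement --- the odd cubes whose first $d-1$ coordinates avoid $\widetilde{L_{d-1}}$ --- has density $\frac{d-1}{2d-1}$. The point I would stress is that every complementary cube is an \emph{isolated} single-cube hole: all $2d$ of its neighbors have opposite parity and so lie in $K_d$, while no two complementary cubes are adjacent because a rook step flips parity. Thus the complement of $\bar K_d$ is a disjoint union of $\frac{d-1}{2d-1}\det(\Lambda)$ isolated holes, exactly as claimed.

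The hard part will be connectivity, because the toric setting offers no ``shell'' to tie the columns together. In $\mathbb R^d$ every column reaches every other through the filled shell; on the torus that crutch is gone, and I expect the following sharp obstruction. An even cube lying off the columns is a \emph{leaf} of $\bar K_d$: by the perfect-code property of Theorem \ref{thm:GW} its first $d-1$ coordinates have a unique code-neighbor, and its only neighbor in $K_d$ is the corresponding cube of that single column. Consequently two distinct columns can never be joined by a path of interior cubes, so $\bar K_d$ is connected \emph{if and only if} all columns of $K_d$ are identified to one on the torus --- equivalently, if and only if the projection of $\Lambda$ to the first $d-1$ coordinates is all of $\widetilde{L_{d-1}}$. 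This is exactly where the dynamical-systems picture enters: reading the last coordinate as time, a generator of $\Lambda$ with horizontal part $w$ acts on the finite set of columns by the translation $x\mapsto x+w$, and connectivity becomes the transitivity of this linear return map.

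It remains to exhibit the lattices. For each even $N\ge 2$ I would lift a basis of $\widetilde{L_{d-1}}$ to $\mathbb Z^d$, choosing each last coordinate so the coordinate sum is even, and adjoin $Ne_d$; let $\Lambda$ be the resulting lattice. Then $\Lambda\subseteq\Lambda_0$, its horizontal projection is all of $\widetilde{L_{d-1}}$, and $\det(\Lambda)=(2d-1)N$. On this torus the horizontal $(d-1)$-torus has covolume exactly $2d-1$, so the single $(d-1)$-dimensional jack centered at the lone code point already tiles it; every off-column even cube is therefore face-adjacent, at its own height, to the unique column, and $\bar K_d$ is rook-connected. Letting $N$ run through the even integers yields infinitely many lattices of arbitrarily large covolume, each carrying a toric polyomino with $dN=\frac{d}{2d-1}\det(\Lambda)$ tiles and $(d-1)N=\frac{d-1}{2d-1}\det(\Lambda)$ holes, as desired.
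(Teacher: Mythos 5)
Your proof is correct, and it shares the paper's overall skeleton---project $K_d$ to $\mathbb{R}^d/\Lambda$ for a sublattice $\Lambda \subset \Lambda_0$, get the tile and hole counts from the density $d/(2d-1)$ and the fact that every missing cube is an isolated single-cube hole, and reduce the whole problem to rook-connectivity of the quotient---but the key step, the choice of $\Lambda$ and the verification of connectivity, is genuinely different. The paper's lattices are generated by $n_1\mathbf u_1,\ldots,n_{d-1}\mathbf u_{d-1}$ together with the twist vector $c\mathbf e_d - \sum_{i=1}^{d-1}\mathbf u_i$, with $n_1,\ldots,n_{d-1}$ pairwise coprime ($n_1$ even, $c$ odd); connectivity is then a discrete Kronecker/Chinese-remainder-theorem statement (Propositions \ref{DiscErg}, \ref{DiscErg2}, \ref{DiscErgKd}): the image of a single vertical column winds around the torus and meets every column. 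You instead isolate a clean criterion the paper never states explicitly---since off-column even cubes are leaves attached to a unique column, $K_d/\Lambda$ is rook-connected if and only if the horizontal projection of $\Lambda$ equals $\widetilde{L_{d-1}}$---and then satisfy it trivially by placing parity-corrected lifts of an entire basis of $\widetilde{L_{d-1}}$, plus $N\mathbf e_d$ with $N$ even, inside $\Lambda$; no coprimality or CRT is needed, and your ``if and only if'' is sharper than anything stated in the paper. What the paper's more elaborate family buys is the geometric quality of the examples: taking $n_1,\ldots,n_{d-1},c$ large and nearly equal makes $\Lambda$ nearly homothetic to $\Lambda_0$, yielding ``fat'' tori whose systole is within a constant factor of the universal bound $C(d)(\vol T)^{1/d}$, which is the subject of the paper's closing remarks. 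Your tori always contain the fixed short vectors lifted from the chosen basis of $\widetilde{L_{d-1}}$, so their systole stays bounded by a constant depending only on $d$ while their volume $(2d-1)N$ grows without bound; they are precisely the ``long skinny tubes'' the paper singles out as unsatisfying---entirely adequate for the theorem as stated, but a strictly less flexible class of examples.
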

We first see that this maximizes the number of holes in a toric polyomino of a given volume.  In the absence of an outer perimeter, equation \eqref{eqn:2dh} for a polyomino of volume $n$ becomes
\[2dh \leq 2dn-2b(A) \leq 2dn-2(n-1)=2(d-1)n+2,\]
and so $h \leq \frac{d-1}{d}n+\frac{1}{d}$.

To construct optimal examples, we take a lattice $\Lambda$ such that the set $K_d$ constructed in \S\ref{lower} is $\Lambda$-invariant, and let $P$ be the projection of $K_d$ to $\mathbb{R}^d/\Lambda$.  Then $P$ has the following features:
\begin{itemize}
\item The volume of $P$ is $\frac{d}{2d-1}\det(\Lambda)$.
\item Each cube not in $P$ is surrounded by cubes in $P$.
\end{itemize}
Therefore, $P$ is a toric polyomino with a maximal number of holes if and only if it is a toric polyomino, i.e.~if it is rook-connected.

Lattices $\Lambda$ with this property are easy to construct:
\begin{prop}
Let $\Lambda_0$ be the lattice consisting of points in $\widetilde{L_{d-1}} \times \mathbb{Z}$ for which the sum of the coordinates is even.  Then $K_d$ is $\Lambda_0$-invariant, and therefore invariant with respect to any sublattice of $\Lambda_0$.
\end{prop}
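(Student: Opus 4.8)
The plan is to exploit the fact that each of the two defining conditions of $K_d$ is controlled by a group homomorphism out of $\mathbb{Z}^d$, and that the two conditions defining $\Lambda_0$ are tailored to land in the respective kernels, so that translation by $\Lambda_0$ cannot disturb either condition. First I would record that $\widetilde{L_{d-1}}$ is not merely a subset but a sublattice of $\mathbb{Z}^{d-1}$: by the construction in Theorem \ref{thm:GW} it is exactly the kernel of the homomorphism $\sigma\colon \mathbb{Z}^{d-1} \to \mathbb{Z}/(2d-1)\mathbb{Z}$ given by $\sigma(a_1,\ldots,a_{d-1})=\sum_{i=1}^{d-1} i a_i$. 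Hence $\widetilde{L_{d-1}} \times \mathbb{Z}$ is a sublattice of $\mathbb{Z}^d$ of index $2d-1$, and intersecting it with the even-sum condition---the kernel of the parity homomorphism $\pi\colon \mathbb{Z}^d \to \mathbb{Z}/2\mathbb{Z}$, $\pi(\mathbf x)=\sum_{i=1}^d x_i$---produces $\Lambda_0$. Since $\pi$ restricted to $\widetilde{L_{d-1}} \times \mathbb{Z}$ is surjective (the last coordinate ranges over all of $\mathbb{Z}$), $\Lambda_0$ is an honest index-$2$ sublattice of $\widetilde{L_{d-1}} \times \mathbb{Z}$, so in particular it is a lattice.

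Next I would decompose $K_d$ as the union of its two defining pieces: the even cubes $E=\pi^{-1}(0)$, and the $\widetilde{L}$-columns $C=\{\mathbf x : (x_1,\ldots,x_{d-1}) \in \widetilde{L_{d-1}}\}=\widetilde{L_{d-1}}\times\mathbb{Z}$. The key step is to show separately that translation by any $\mathbf v \in \Lambda_0$ preserves membership in $E$ and in $C$. For $E$: since $\mathbf v \in \Lambda_0$ forces $\pi(\mathbf v)=0$, we get $\pi(\mathbf x+\mathbf v)=\pi(\mathbf x)$, so the parity is unchanged and $\mathbf x \in E \iff \mathbf x+\mathbf v \in E$. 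For $C$: writing $\rho\colon \mathbb{Z}^d \to \mathbb{Z}^{d-1}$ for the projection onto the first $d-1$ coordinates, membership $\mathbf x \in C$ is the condition $\sigma(\rho(\mathbf x))=0$; because $\rho(\mathbf v)\in \widetilde{L_{d-1}}=\ker\sigma$, we have $\sigma(\rho(\mathbf x+\mathbf v))=\sigma(\rho(\mathbf x))$, so column-membership is also unchanged.

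Since each of $E$ and $C$ is invariant under translation by every $\mathbf v \in \Lambda_0$, their union $K_d=E\cup C$ is as well; applying the same argument to $-\mathbf v$ (or simply noting that the two conditions above are equivalences) yields full $\Lambda_0$-invariance of $K_d$. Finally, any sublattice $\Lambda\subseteq\Lambda_0$ consists of elements of $\Lambda_0$, so invariance under $\Lambda_0$ immediately implies invariance under $\Lambda$. I do not expect a genuine obstacle here; the only point requiring care is to notice the clean correspondence between the two pieces of the definition of $\Lambda_0$ (even coordinate sum, and first $d-1$ coordinates in $\widetilde{L_{d-1}}$) and the two kernels $\ker\pi$ and $\ker\sigma$ controlling the two pieces of $K_d$, so that the two conditions decouple and never interfere with one another.
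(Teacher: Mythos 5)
Your proof is correct and follows essentially the same route as the paper: decompose $K_d$ as the union of the even points and the columns $\widetilde{L_{d-1}} \times \mathbb{Z}$, observe that each piece is a subgroup of $\mathbb{Z}^d$ containing $\Lambda_0$ (you phrase this via the kernels of $\pi$ and $\sigma$, the paper via lattices), and conclude that translation by $\Lambda_0$ preserves each piece and hence their union. Your kernel formulation and the index computations are just a more explicit spelling-out of the paper's one-paragraph argument.
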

\begin{proof}
We refer to lattice points the sum of whose coordinates is even as \emph{even points}.  The set $K_d$ is the union of $\widetilde{L_{d-1}} \times \mathbb{Z}$ and the lattice of even points.  Therefore, any lattice whose action preserves these two lattices preserves $K_d$, in particular their intersection $\Lambda_0$.
\end{proof}
Thus the main goal of the section is to find lattices $\Lambda \subset \Lambda_0$ such that $K_d/\Lambda$ is rook-connected and investigate their properties.

To get a handle on this, recall that every rook-connected component of $K_d$ is a one-dimensional column (parallel to the $d$th coordinate direction) together with every other cube from each adjacent column.  Thus for $K_d/\Lambda$ to be rook-connected, we need to make sure that $\mathbb{R}^d/\Lambda$ is covered by the image of a single column and its neighbors.  We can think of this as a discrete analogue of either ergodicity or density for linear flows on the torus.

One of the most classical results in dynamical systems is Kronecker's density theorem: in $\mathbb{R}^2/\mathbb{Z}^2$, the trajectory of a flow along a line of irrational slope is dense.  More generally, in $\mathbb{R}^d/\mathbb{Z}^d$, the trajectory of a linear flow $(\alpha_1 t,\ldots,\alpha_d t)$ is dense as long as $\alpha_i/\alpha_j$ is irrational for every $i \neq j$.\pagebreak[2]

A discrete analogue of this follows immediately from the Chinese remainder theorem:
\begin{prop} \label{DiscErg}
Let $n_1,\ldots,n_{d-1}$ be pairwise relatively prime integers, and consider the linear flow $\ph(t)=(t/n_1,\ldots,t/n_{d-1},t)$ on $\mathbb{R}^d/\mathbb{Z}^d$.  The image of $\ph(t)$ contains every point of the form
\[(k_1/n_1,\ldots,k_{d-1}/n_{d-1},0), \qquad k_i \in \mathbb{Z}/n_i\mathbb{Z}.\]
\end{prop}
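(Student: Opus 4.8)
The plan is to hit each target point at an integer time of the flow and reduce the problem to a system of congruences. Observe first that any point of the form $(k_1/n_1,\ldots,k_{d-1}/n_{d-1},0)$ has last coordinate equal to $0$ in $\mathbb{R}/\mathbb{Z}$, while the last coordinate of $\ph(t)$ is $t$ itself. So if $\ph(t)$ is to equal such a point, we are forced to take $t$ to be an integer; write $t=N\in\mathbb{Z}$. This is the only constraint coming from the final coordinate, and it costs us nothing, since $N\equiv 0 \pmod 1$ automatically matches the target's last entry.

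Next I would analyze the remaining $d-1$ coordinates. For each $i$, the $i$th coordinate of $\ph(N)$ is $N/n_i$ taken modulo $1$, and we want this to agree with $k_i/n_i$ modulo $1$. Clearing denominators, $N/n_i \equiv k_i/n_i \pmod 1$ holds if and only if $n_i \mid (N-k_i)$, i.e.\ $N \equiv k_i \pmod{n_i}$. Thus the single integer time $N$ must simultaneously satisfy the system
\[
N \equiv k_i \pmod{n_i}, \qquad i=1,\ldots,d-1.
\]

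Finally I would invoke the Chinese remainder theorem: because the moduli $n_1,\ldots,n_{d-1}$ are pairwise relatively prime, this system has a solution $N\in\mathbb{Z}$ (indeed a unique residue class modulo $n_1\cdots n_{d-1}$). For that $N$ we have $\ph(N)=(k_1/n_1,\ldots,k_{d-1}/n_{d-1},0)$ in $\mathbb{R}^d/\mathbb{Z}^d$, as desired. There is no real obstacle here; the only place the hypotheses are used is in ensuring the congruence system is solvable, which is exactly the pairwise-coprimality assumption. The one point worth stating carefully is the reduction of the coordinate-matching condition to the integrality $n_i \mid (N-k_i)$, so that the discrete nature of the problem is made explicit before CRT is applied.
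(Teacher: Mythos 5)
Your proof is correct and takes exactly the approach the paper intends: the paper states that the proposition ``follows immediately from the Chinese remainder theorem'' without spelling out details, and your argument---forcing $t$ to be an integer $N$ via the last coordinate, reducing coordinate-matching to the congruences $N \equiv k_i \pmod{n_i}$, and solving by CRT using pairwise coprimality---is precisely that argument made explicit.
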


In these examples we fixed the torus and played with the direction of the flow.  In our main case of interest, the flow direction is fixed---it is the $d$th coordinate direction---and we can vary the torus.  Of course, we can switch between these points of view via a coordinate change, as in the following reformulation of Proposition \ref{DiscErg}.  From here on, we denote the standard basis vectors of $\mathbb{R}^d$ by $\mathbf e_1,\ldots,\mathbf e_d$.

\begin{figure}
    \centering
    \tikzfading[name=fade down,
        top color=transparent!0,
        bottom color=transparent!100]
    \tikzfading[name=fade up,
        top color=transparent!100,
        bottom color=transparent!0]
    
    \begin{tikzpicture}[z={(0.0,1.0)},x={(1.0,0.0)},y={(0.6,0.4)},
    dot/.style={circle, draw=blue, fill=blue, inner sep = 0pt, minimum size = 1mm}]
    \draw (4,2,3) -- (5,3,0) -- (0,3,0) -- (-1,2,3) -- cycle;
    \draw (0,0,0) -- (0,3,0);
    
    \foreach \x in {0,...,4}
        \foreach \y in {0,1,2} {
            \draw[blue,very thick] (\x,\y,0) -- (\x,\y,3);
        }
    \foreach \x in {0,...,5}
        \foreach \y in {0,...,3} {
            \draw[blue,very thick,path fading=fade down] (\x,\y,-0.2) -- (\x,\y,0);
            \draw[blue,very thick,path fading=fade up] (\x,\y,0) -- (\x,\y,0.3);
            \draw[blue,very thick,path fading=fade down] (\x-1,\y-1,2.7) -- (\x-1,\y-1,3);
            \draw[blue,very thick,path fading=fade up] (\x-1,\y-1,3) -- (\x-1,\y-1,3.2);
        }
    \draw[red, ultra thick] (2,1,0) -- (2,1,3);
    \draw[red, ultra thick, path fading=fade down] (2,1,0) -- (2,1,-0.2);
    \draw[red, ultra thick, path fading=fade up] (2,1,3) -- (2,1,3.2);
    \draw[red, ultra thick, path fading=fade down] (3,2,0) -- (3,2,-0.2);
    \draw[red, ultra thick, path fading=fade up] (1,0,3) -- (1,0,3.2);
    \draw[red, ultra thick, path fading=fade up] (3,2,0) -- (3,2,1.5);
    \draw[red, ultra thick, path fading=fade down] (1,0,3) -- (1,0,1.5);
    
    \draw (0,0,0) -- (-1,-1,3) -- (4,-1,3) -- (5,0,0) -- cycle;
    \draw (-1,-1,3) -- (-1,2,3) (5,0,0) -- (5,3,0) (4,-1,3) -- (4,2,3);
    \end{tikzpicture}
    \caption{A fundamental domain for the flow in Proposition \ref{DiscErg2} in 3D, with $n_1=5$, $n_2=3$, $c=3$.  A contiguous portion of the flow line is highlighted.}
    \label{fig:DiscErg}
\end{figure}
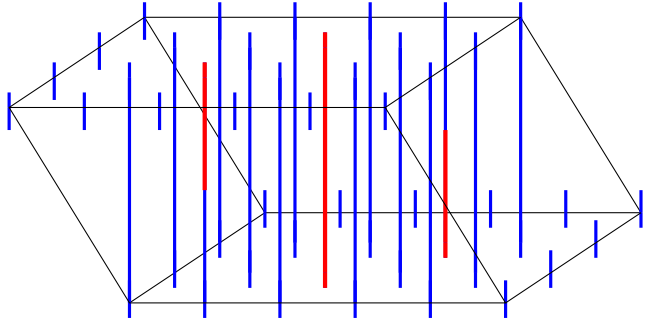
\begin{prop} \label{DiscErg2}
Let $n_1,\ldots,n_{d-1}$ be pairwise relatively prime integers, and consider the linear flow $\ph(t)=t\mathbf e_d$ on $\mathbb{R}^d/\Lambda_c$, where $\Lambda_c$ is the lattice generated by
\[n_1\mathbf e_1,\ldots,n_{d-1}\mathbf e_{d-1},c\mathbf e_d-\sum_{i=1}^{d-1} \mathbf e_i,\]
for any $c \in \mathbb{R}^+$.  The image of $\ph(t)$ is exactly
\[\{(k_1,\ldots,k_{d-1},t): k_i \in \mathbb{Z}/n_i\mathbb{Z}\text{ and }t \in \mathbb{R}/c\mathbb{Z}\}.\]
\end{prop}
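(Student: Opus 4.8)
The plan is to prove Proposition \ref{DiscErg2} by deriving it directly from Proposition \ref{DiscErg} via an explicit linear coordinate change, rather than re-running the Chinese-remainder-theorem argument from scratch. First I would construct a linear map $\Phi: \mathbb{R}^d \to \mathbb{R}^d$ that carries the standard lattice $\mathbb{Z}^d$ (the setting of Proposition \ref{DiscErg}) to the lattice $\Lambda_c$ (the setting here), while simultaneously carrying the flow direction of Proposition \ref{DiscErg}, namely the vector $(1/n_1,\ldots,1/n_{d-1},1)$, to a positive multiple of $\mathbf e_d$. Concretely, I would set $\Phi(\mathbf e_i)=n_i\mathbf e_i$ for $i<d$ and choose the image of $\mathbf e_d$ so that $\Phi$ sends the generators $\mathbf e_1,\ldots,\mathbf e_{d-1},\mathbf e_d$ of $\mathbb{Z}^d$ to the listed generators $n_1\mathbf e_1,\ldots,n_{d-1}\mathbf e_{d-1},\,c\mathbf e_d-\sum_{i=1}^{d-1}\mathbf e_i$ of $\Lambda_c$. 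Checking that this $\Phi$ does the job is a matter of verifying two things: that it is invertible (so it descends to a diffeomorphism of tori), and that it intertwines the two flows up to time reparametrization.

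The key computation is the second verification. Under $\Phi$, the flow line $s \mapsto s\cdot(1/n_1,\ldots,1/n_{d-1},1)$ in $\mathbb{R}^d/\mathbb{Z}^d$ should map to the flow line $t \mapsto t\mathbf e_d$ in $\mathbb{R}^d/\Lambda_c$. I would check that $\Phi$ applied to the direction vector $(1/n_1,\ldots,1/n_{d-1},1)$ yields a vector proportional to $\mathbf e_d$ modulo the span adjustments forced by the torus identification. Since $\Phi(\mathbf e_d)$ is chosen to equal $c\mathbf e_d-\sum \mathbf e_i$ while $\Phi(\mathbf e_i)=n_i\mathbf e_i$, one finds
\[
\Phi\Bigl(\tfrac{1}{n_1},\ldots,\tfrac{1}{n_{d-1}},1\Bigr)
=\sum_{i=1}^{d-1}\tfrac{1}{n_i}\cdot n_i\mathbf e_i+\Bigl(c\mathbf e_d-\sum_{i=1}^{d-1}\mathbf e_i\Bigr)
=c\mathbf e_d,
\]
so the flow direction maps exactly to $c\mathbf e_d$, a positive multiple of $\mathbf e_d$. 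This is the heart of the matter and it works cleanly by the deliberate design of $\Lambda_c$. Consequently $\Phi$ carries the orbit closure of the first flow onto that of the second.

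With the intertwining established, I would transport the conclusion of Proposition \ref{DiscErg}. That proposition identifies the image of the flow in $\mathbb{R}^d/\mathbb{Z}^d$ as the set of points $(k_1/n_1,\ldots,k_{d-1}/n_{d-1},0)$ with $k_i \in \mathbb{Z}/n_i\mathbb{Z}$, sitting on the hyperplane of last coordinate $0$; more fully, the image of the flow is the union of the vertical lines (in the $\mathbf e_d$-direction) through these points. Applying $\Phi$ sends the point $(k_1/n_1,\ldots,k_{d-1}/n_{d-1},0)$ to $(k_1,\ldots,k_{d-1},0)$, and sends the vertical direction $\mathbf e_d$ of the source torus to the direction $c\mathbf e_d-\sum\mathbf e_i$, whose image in $\mathbb{R}^d/\Lambda_c$ is exactly the $\mathbf e_d$-line modulo $c\mathbb{Z}$. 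Thus the image of $\ph(t)=t\mathbf e_d$ in $\mathbb{R}^d/\Lambda_c$ is precisely $\{(k_1,\ldots,k_{d-1},t): k_i\in\mathbb{Z}/n_i\mathbb{Z},\ t\in\mathbb{R}/c\mathbb{Z}\}$, as claimed.

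The main obstacle I anticipate is bookkeeping the two quotient operations correctly: one must be careful that the integer coordinates $k_i$ are genuinely read modulo $n_i$ after passing to $\mathbb{R}^d/\Lambda_c$ (this follows from the generators $n_i\mathbf e_i \in \Lambda_c$), and that the last coordinate is read modulo $c$ (which requires noting that $c\mathbf e_d$ lies in $\Lambda_c$, obtained by adding the skew generator $c\mathbf e_d-\sum\mathbf e_i$ to a suitable integer combination of the $n_i\mathbf e_i$). Once one confirms that $\Lambda_c$ contains both $n_i\mathbf e_i$ and $c\mathbf e_d$, and that $\Phi(\mathbb{Z}^d)=\Lambda_c$ so no spurious identifications appear, the result follows. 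A cleaner alternative, avoiding $\Phi$ entirely, is to observe that the orbit of $t\mathbf e_d$ reaches the point $(k_1,\ldots,k_{d-1},0)$ exactly when there is an integer combination $t\mathbf e_d - (k_1,\ldots,k_{d-1},0) \in \Lambda_c$; unwinding this membership condition reduces, by the pairwise coprimality of the $n_i$ and the Chinese remainder theorem, to the same statement as Proposition \ref{DiscErg}. I would present the coordinate-change version as the main argument, since it makes the passage from one proposition to the other transparent.
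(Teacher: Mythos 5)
Your overall strategy---deriving Proposition \ref{DiscErg2} from Proposition \ref{DiscErg} via the linear map $\Phi$ with $\Phi(\mathbf e_i)=n_i\mathbf e_i$ for $i<d$ and $\Phi(\mathbf e_d)=c\mathbf e_d-\sum_{i<d}\mathbf e_i$---is exactly what the paper intends (it offers no written proof, presenting the proposition as a ``reformulation'' of Proposition \ref{DiscErg} obtained by a coordinate change), and your key computation $\Phi\bigl(\tfrac1{n_1},\ldots,\tfrac1{n_{d-1}},1\bigr)=c\mathbf e_d$ is correct. But the transport step contains compensating errors, so the argument as written does not prove the statement. First, the image of the flow of Proposition \ref{DiscErg} is \emph{not} ``the union of the vertical lines (in the $\mathbf e_d$-direction)'' through the points $(k_1/n_1,\ldots,k_{d-1}/n_{d-1},0)$: the flow moves in the diagonal direction $v=(1/n_1,\ldots,1/n_{d-1},1)$, so its image is the union of the lines in direction $v$ through those points. (Already for $d=2$, $n_1=2$, the image of $t\mapsto(t/2,t)$ is a single closed diagonal geodesic, not the two vertical circles $\{0\}\times\mathbb R/\mathbb Z$ and $\{1/2\}\times\mathbb R/\mathbb Z$.) Second, pushing your incorrect set forward by $\Phi$ yields the union of the lines in direction $c\mathbf e_d-\sum_{i<d}\mathbf e_i$ through the points $(k_1,\ldots,k_{d-1},0)$; since that direction vector lies in $\Lambda_c$, each such line is a closed circle, and their union is the sheared set $\{(x,y): x_i\equiv -y/c \pmod 1 \ \forall i\}$, which mod $\Lambda_c$ is genuinely different from the asserted answer (it misses, e.g., the points $(0,\ldots,0,s)$ with $s\notin c\mathbb Z$). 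Your claim that this union ``is exactly the $\mathbf e_d$-line modulo $c\mathbb Z$'' is where the two errors cancel---by assertion rather than by proof. A separate slip: $c\mathbf e_d$ does \emph{not} lie in $\Lambda_c$ unless every $n_i=1$, since writing $\sum_{i<d}\mathbf e_i$ as an integer combination of the $n_i\mathbf e_i$ requires $n_i\mid 1$; rather, $c\mathbf e_d\equiv\sum_{i<d}\mathbf e_i\pmod{\Lambda_c}$, so adding $c$ to the last coordinate shifts every $k_i$ by $1$ (the single orbit closes up only after time $n_1\cdots n_{d-1}c$, not $c$).

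The repair is immediate and is the real content of the coordinate change: $\Phi$ sends the \emph{flow direction} $v$ to $c\mathbf e_d$ and the points $(k_1/n_1,\ldots,k_{d-1}/n_{d-1},0)$ to $(k_1,\ldots,k_{d-1},0)$, so the $\Phi$-image of the union of $v$-lines through those points is the union of $\mathbf e_d$-lines through the points of $\mathbb Z^{d-1}\times\{0\}$, i.e.\ the image of $\mathbb Z^{d-1}\times\mathbb R$ in $\mathbb R^d/\Lambda_c$, which is the claimed set. (To get ``exactly'' you also need the easy converse to Proposition \ref{DiscErg}: the orbit meets the slice $x_d\in\mathbb Z$ only at points of the stated form.) Your ``cleaner alternative''---unwinding the condition $t\mathbf e_d-(k_1,\ldots,k_{d-1},0)\in\Lambda_c$ directly via the Chinese remainder theorem---is sound and would have avoided all of this; as written, though, the main argument transports the wrong set and lands on the right answer only by accident.
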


This is already very close to what we want.  We get there using a further coordinate change:
\begin{prop} \label{DiscErgKd}
Let $d \geq 2$ and $q=2d-1$.  Let $n_1,\ldots,n_{d-1}$ be pairwise relatively prime integers such that $n_1$ is even, and $c$ an odd integer.  Fix the vectors
\begin{align*}
    \mathbf{u}_1 &= q\mathbf{e}_1 \\
    \mathbf{u}_i &= \mathbf{e}_i-i\mathbf{e}_1, & 2 &\leq i \leq d-1\text{, $i$ odd} \\
    \mathbf{u}_i &= \mathbf{e}_i+(q-i)\mathbf{e}_1, & 2 &\leq i \leq d-1\text{, $i$ even} \\
    \mathbf{u}_d &= \mathbf e_d,
\end{align*}
and let $\Lambda$ be the lattice in $\mathbb{R}^d$ generated by
\[n_1\mathbf u_1,\ldots,n_{d-1}\mathbf u_{d-1},c\mathbf e_d-\sum_{i=1}^{d-1} \mathbf u_i.\]\nopagebreak[0]
Then $\Lambda \subset \Lambda_0$ and $K_d/\Lambda$ is rook-connected.
\end{prop}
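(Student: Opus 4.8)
The plan is to treat the two claims separately: the containment $\Lambda \subseteq \Lambda_0$ is a direct computation on generators, while the rook-connectivity of $K_d/\Lambda$ I would derive from Proposition \ref{DiscErg2} through the linear change of basis $T\colon \mathbf e_i \mapsto \mathbf u_i$ (with $\mathbf u_d = \mathbf e_d$).

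First I would check $\Lambda \subseteq \Lambda_0$ by verifying that each listed generator both lies in $\widetilde{L_{d-1}} \times \mathbb Z$ and has even coordinate sum. For the first condition, the key observation is that every $\bar{\mathbf u}_i$ (the image of $\mathbf u_i$ in $\mathbb Z^{d-1}$ obtained by dropping the $\mathbf e_d$-coordinate) already satisfies $\sum_j j(\bar{\mathbf u}_i)_j \equiv 0 \pmod q$: this is immediate from the defining formulas and $q = 2d-1$, splitting into the cases $i=1$, $i$ odd, and $i$ even. Since $\widetilde{L_{d-1}}$ is the subgroup cut out by this single congruence, every integer combination of the $\bar{\mathbf u}_i$—in particular the horizontal parts of all the generators—lies in $\widetilde{L_{d-1}}$. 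For the parity condition, I would note that the coordinate sum of $\mathbf u_i$ is even for $2 \le i \le d-1$ (using that $q$ is odd in the even-$i$ case), that $n_1\mathbf u_1$ has even coordinate sum because $n_1$ is even, and that $\sum_{i=1}^{d-1}\mathbf u_i$ has odd coordinate sum, whence $c\mathbf e_d - \sum \mathbf u_i$ has even coordinate sum because $c$ is odd.

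Next I would set up the reduction. The map $T$ fixes the flow direction $\mathbf e_d$ and sends the generators $n_i\mathbf e_i$ and $c\mathbf e_d - \sum \mathbf e_i$ of the lattice $\Lambda_c$ from Proposition \ref{DiscErg2} to the generators of $\Lambda$, so $T\Lambda_c = \Lambda$. The crucial structural fact is that $\bar{\mathbf u}_1,\dots,\bar{\mathbf u}_{d-1}$ form a $\mathbb Z$-basis of $\widetilde{L_{d-1}}$: the matrix with these rows is triangular with determinant $\pm q$, which equals the index $[\mathbb Z^{d-1} : \widetilde{L_{d-1}}]$, and each $\bar{\mathbf u}_i$ lies in $\widetilde{L_{d-1}}$. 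Consequently $T$ descends to an isomorphism $\mathbb R^d/\Lambda_c \to \mathbb R^d/\Lambda$ that intertwines the two flows and carries the integer columns of the $\Lambda_c$-torus exactly onto the codeword columns of the $\Lambda$-torus (those standing over $\widetilde{L_{d-1}}$).

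Finally, I would assemble connectivity. Transporting the conclusion of Proposition \ref{DiscErg2} through $T$ shows that the image of a single vertical column in $\mathbb R^d/\Lambda$ meets every codeword column; since this image is connected and each codeword column is a full, hence connected, vertical circle, all codeword columns belong to one rook-component. To reach the remaining cubes I would invoke Proposition \ref{1/q} for $\mathbb Z^{d-1}/\widetilde{L_{d-1}} \cong \mathbb Z/q\mathbb Z$: every non-codeword column neighbors exactly one codeword column, and the cubes of $K_d$ over it are precisely its even cubes, each of which shares a face with a cube of that codeword column. Hence every cube of $K_d/\Lambda$ is joined to the single component containing all codeword columns, and $K_d/\Lambda$ is rook-connected. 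I expect the main obstacle to be this last bridging step: one must confirm that the winding vertical column meets each codeword column in genuine (integer-height) cubes—this is where $c\in\mathbb Z$ enters—and that the neighbor attachment is face-to-face, for which the perfect-code property of Proposition \ref{1/q} together with the $\Lambda_0$-invariance of $K_d$ is exactly what is required.
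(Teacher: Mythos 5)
Your proof is correct and takes essentially the same route as the paper's: apply the coordinate change $\mathbf e_i \mapsto \mathbf u_i$ to Proposition \ref{DiscErg2}, identify the lattice spanned by the horizontal vectors $\mathbf u_1,\ldots,\mathbf u_{d-1}$ with $\widetilde{L_{d-1}}$ via the determinant-equals-index argument, verify $\Lambda \subset \Lambda_0$ by the same parity check on the generators, and conclude rook-connectivity from the perfect-code adjacency property of $\widetilde{L_{d-1}}$. The only difference is that you spell out details (the congruence check for each $\mathbf u_i$, the face-to-face attachment of even cubes to codeword columns) that the paper leaves implicit.
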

This proposition completes the proof of Theorem \ref{thm:tori}.
\begin{proof}
By applying the coordinate change $\mathbf e_i \mapsto \mathbf u_i$ to Proposition \ref{DiscErg2}, we see that the image of $\mathbb{R}\mathbf e_d$ in $K_d/\Lambda$ is
\[X=\{(x_1,\ldots,x_{d-1},t) : (x_1,\ldots,x_{d-1}) \in \Gamma\text{ and }t \in \mathbb{R}/c\mathbb{Z}\}\]
where $\Gamma$ is the lattice in $\mathbb{R}^{d-1} \times \{0\}$ generated by $\mathbf u_1,\ldots,\mathbf u_{d-1}$.

In fact, $\Gamma=\widetilde{L_{d-1}} \times \{0\}$.  To see this, notice that $\det(\Gamma)=q$ and that each $\mathbf u_i$ is an element of $\widetilde{L_{d-1}} \times \{0\}$.  Moreover, since $\mathbf u_1$ and $\mathbf u_d$ are odd, $n_1$ is even, $\mathbf u_2,\ldots,\mathbf u_{d-1}$ are even, and $c$ is odd, all the generators of $\Lambda$ are even.  Therefore $\Lambda \subset \Lambda_0$ and $K_d/\Lambda$ is well-defined.

From the fact that $\Gamma=\widetilde{L_{d-1}} \times \{0\}$, it follows that every cube in $K_d/\Lambda$ either is associated to a point of $X$ or is adjacent to a cube that is.  This proves that $K_d/\Lambda$ is rook-connected.
\end{proof}


To conclude, we make some remarks about the shapes of these tori.  We can get the infinite number of tori required by Theorem \ref{thm:tori} just by taking $n_1=2$, $n_2,\ldots,n_{d-1}=1$, and arbitrarily large $c$.  However, this solution is somehow unsatisfying, since the resulting tori are almost one-dimensional: they are long skinny tubes which the polyomino fills up by winding around them twice.

One way to measure the ``fatness'' of a Riemannian manifold is to estimate how big you can grow a metric ball around any point before it stops being topologically a ball.  For a flat torus $T=\mathbb{R}^d/\Lambda$, the supremal diameter of such balls is the \emph{systole} of $T$: the length of its shortest topologically nontrivial loop or, equivalently, the length of the shortest nonzero vector in the lattice $\Lambda$.  A large systole can be thought of as a sign that a $d$-torus is $d$-dimensional in an essential way.

The systole of a flat torus $T$ is bounded above by $C(d)(\vol T)^{1/d}$; to see this, scale the lattice to have covolume $1$ and notice that the size of the shortest nonzero vector is bounded above.  (The fact that a similar bound holds for arbitrary Riemannian metrics is much harder to prove and is an instance of Gromov's systolic inequality, see \cite{Guth}.)  The constant $C(d)^2$ is known as the \emph{Hermite constant} in dimension $d$.

The tori in our construction get within a multiplicative constant of this bound: if we choose $n_1,\ldots,n_{d-1}$ and $c$ to be large and close to each other, then the lattice $\Lambda$ is very nearly homothetic to $\Lambda_0$.  It would be interesting to know whether one can choose $\Lambda$ satisfying the conditions of Theorem \ref{thm:tori} whose systole is arbitrarily close to the universal upper bound.

\subsection*{Acknowledgements}
This project received funding from the European Union's Horizon 2020 research and innovation program under the Marie Sk\l odowska-Curie grant agreement No.~754462.  We would like to thank Matthew Kahle for his encouragement and comments on the paper.

\bibliographystyle{amsplain}
\bibliography{holeyominoes}
\end{document}